\newcommand{\excise}[1]{}
\newtheorem{thm}{Theorem}[section]
\newtheorem{lemma}[thm]{Lemma}
\newtheorem{cor}[thm]{Corollary}
\theoremstyle{definition}
\newtheorem{alg}[thm]{Algorithm}
\newtheorem{defn}[thm]{Definition}
\numberwithin{equation}{section}
\renewcommand\>{\rangle}
\newcommand\NN{\mathbb{N}}
\newcommand\cP{{\mathcal P}}
\newcommand\powerset{{\cP}}
\newcommand\abs[1]{|#1|}
\newcommand{\quotes}[1]{{``#1"}}
\newcommand{\Zz}{\mathsf Z}
\newcommand{\incr}{\mathsf{incr}}
\renewcommand\aa{{\mathbf a}}
\newcommand\ee{{\mathbf e}}
\newcommand{\kernelcall}[1]{<\hspace{-0.4em}<\hspace{-0.4em}<#1>\hspace{-0.4em}>\hspace{-0.4em}>}
\let\oldquote\quote
\let\endoldquote\endquote
\begin{document}

\mbox{}
\title
[Two parallel dynamic lexicographic factorization algorithms]
{Two parallel dynamic lexicographic algorithms for factorization sets in numerical semigroups}
\author[Thomas Barron]{Thomas Barron}
\address{Denver, CO 
}
\email{t@tbarron.xyz}
\subjclass[2020]{90C39 (68W10, 11-04)}
\keywords{numerical semigroup; numerical monoid; factorization set; dynamic algorithms; memoization; parallel algorithms; lexicographic enumeration}

\date{\today}

\begin{abstract}
To the existing dynamic algorithm \textsf{FactorizationsUpToElement} for factorization sets of elements in a numerical semigroup, we add lexicographic and parallel behavior. To the existing parallel lexicographic algorithm for the same, we add dynamic behavior.
The (dimensionwise) dynamic algorithm is parallelized either elementwise or factorizationwise, while the parallel lexicographic algorithm is made dynamic with low-dimension tabulation.
The tabulation for the parallel lexicographic algorithm can itself be performed using the dynamic algorithm.
We provide reference CUDA implementations with measured runtimes.
\end{abstract}
\maketitle
\vspace{-1em}
\section{Introduction}
\label{sec:intro}

The \textbf{factorization set} of an element $n \in \NN$ with respect to a set of generators $(g_1,...,g_d) \in \NN^d$ is defined as:

\begin{minipage}{0.45\textwidth}
\[\Zz : \NN \times \NN^d \to \powerset(\NN^d)\]
\end{minipage}
\begin{minipage}{0.45\textwidth}
\[\Zz\left(n, (g_1, ..., g_d)\right) := \left\{(a_1,...,a_d) \middle| \sum_i a_ig_i = n\right\}\]
\end{minipage}

The generators may be omitted to write $\Zz(n)$ when they remain clear from context.

\textbf{Dynamic programming} involves taking advantage of \emph{overlapping subproblems}, meaning that the problem can be decomposed into subproblems which reoccur more than once. An existing dynamic algorithm \textsf{FactorizationsUpToElement} \cite{dynamic} takes advantage of a \emph{dimensionwise} recurrence relation, specifically $\Zz(n) = \bigcup_i \incr_i(\Zz(n-g_i))$. In this work we show that a variant of this algorithm already proceeds in lexicographic order, and that the algorithm can be parallelized, either elementwise, by computing in batches of $g_{smallest}$ and assigning each of $\Zz(x+1), ..., \Zz(x+g_{smallest})$ to different workers, or factorizationwise, by assigning the many $\mathsf{incr}_i$ copy-and-increment operations to different workers.

Parallel bounded lexicographic enumeration \cite{parallelboundedlexicographic} of the factorization set of an element
assigns each worker a lexicographically bounded slice of the final output, and each worker returns zero or one factorizations per iteration of \textsf{nextCandidate}.
In this work we add dynamic behavior via \emph{low-dimension tabulation} (or \emph{memoization}),
in which the fixing of some number of leading coordinates leads to a lexicographically contiguous subproblem on the remaining final coordinates, the solutions to which may be looked up in the table (or \emph{memo}) and all returned at once.

We update the reference CUDA \cite{cuda} implementation of parallel bounded lexicographic streams from \cite{parallelboundedlexicographic} to include the above dynamic behavior;
we implement the lexicographic variant of the dynamic algorithm \textsf{FactorizationsUpToElement} from \cite{dynamic} with the above parallelism;
and we utilize the dynamic algorithm as an initial set-up step to populate the
memo
for the parallel lexicographic algorithm.
We measure runtimes in a variety of scenarios.

\section{The (dimensionwise) dynamic factorizations algorithm}

The dynamic algorithm \textsf{FactorizationsUpToElement} in \cite{dynamic} computes $\Zz(n)$ by computing \linebreak$\Zz(0), \Zz(1), ..., \Zz(n)$, where each step utilizes the recurrence relation provided by 
looking backwards by each $g_i$ and incrementing the respective factorization sets thereof by 1 in index $i$.
\begin{defn}[Incrementing one index of a factorization]\ \\
\begin{minipage}{0.45\textwidth}
$$\incr_i: \Zz(n) \to \Zz(n+g_i)$$
\end{minipage}
\begin{minipage}{0.45\textwidth}
$$(a_1,...,a_i,...,a_d) \mapsto (a_1,...,a_i+1,...,a_d)$$
\end{minipage}
\end{defn}

\begin{lemma}[{The dynamic recurrence relation for factorizations; \cite[Lemma 3.1]{dynamic}}]\label{l:nondisjointrecurrence}
$$\Zz(n) = \bigcup_i \incr_i(\Zz(n-g_i))$$
\end{lemma}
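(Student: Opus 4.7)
The plan is to prove Lemma~\ref{l:nondisjointrecurrence} by establishing the two inclusions separately, treating $\Zz(m) = \nothing$ whenever $m \notin \NN$ or $m$ is outside the numerical semigroup.

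For the $\supseteq$ inclusion, I take an arbitrary $\mathbf{b} \in \incr_i\bigl(\Zz(n-g_i)\bigr)$ for some index $i$. By definition of $\incr_i$ there exists $\mathbf{a} = (a_1, \ldots, a_d) \in \Zz(n-g_i)$ with $\mathbf{b} = (a_1, \ldots, a_i+1, \ldots, a_d)$. Then $\sum_j b_j g_j = \bigl(\sum_j a_j g_j\bigr) + g_i = (n - g_i) + g_i = n$, so $\mathbf{b} \in \Zz(n)$. Taking the union over $i$ gives the inclusion.

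For the $\subseteq$ inclusion, I take $(a_1, \ldots, a_d) \in \Zz(n)$. Since we may assume $n > 0$ (the case $n = 0$ being degenerate and handled as a base case in the algorithm rather than by the recurrence), we have $\sum_i a_i g_i = n > 0$, so at least one coordinate $a_i$ is strictly positive. Fix such an $i$. Then $(a_1, \ldots, a_i - 1, \ldots, a_d)$ has nonnegative entries and $\sum_j a_j g_j - g_i = n - g_i$, which also verifies $n - g_i \in \NN$. Hence this tuple lies in $\Zz(n-g_i)$, and applying $\incr_i$ recovers $(a_1, \ldots, a_d)$, placing it in $\incr_i(\Zz(n-g_i)) \subseteq \bigcup_j \incr_j(\Zz(n-g_j))$.

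The main (very mild) obstacle is the edge case $n = 0$: the right-hand side of the recurrence is empty while $\Zz(0) = \set{\0}$, so the statement is understood for $n \geq 1$ (or, more carefully, the dynamic algorithm seeds $\Zz(0)$ directly and then applies the recurrence for $n \geq 1$). Everything else is a direct unwinding of the definitions of $\Zz$ and $\incr_i$.
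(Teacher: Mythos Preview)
Your proof is correct: the double-inclusion argument via the definitions of $\Zz$ and $\incr_i$ is exactly the right approach, and your handling of the $n=0$ edge case is appropriate. The paper itself does not supply a proof of this lemma at all---it simply cites it from \cite[Lemma~3.1]{dynamic}---so there is nothing further to compare against.
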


There is another form of this expression as a disjoint union, which we may find useful later.

\begin{lemma}[{Dynamic recurrence relation as disjoint union; \cite[Lemma 3.1]{dynamic}}]\label{disjdyn}
$$\Zz(n)=\bigsqcup_i\incr_i\left(\left\{\vec a \in \Zz(n-g_i) \mid {a_j = 0\ \forall j < i}\right\}\right)$$
\end{lemma}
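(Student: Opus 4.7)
The plan is to refine the non-disjoint union of Lemma \ref{l:nondisjointrecurrence} into a partition by assigning each factorization $\vec b \in \Zz(n)$ to a canonical summand indexed by $i(\vec b) := \min\{k : b_k \geq 1\}$, the least coordinate at which $\vec b$ is positive. Assuming $n > 0$, so that this minimum exists, I will verify that this assignment corresponds exactly to the extra restriction $a_j = 0$ for $j < i$ on the preimage $\vec a = \vec b - \ee_i$, which is what distinguishes the disjoint form from Lemma \ref{l:nondisjointrecurrence}.

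First, the inclusion $\supseteq$ will be immediate, since each set $\incr_i\bigl(\{\vec a \in \Zz(n-g_i) \mid a_j = 0\ \forall j < i\}\bigr)$ is a subset of $\incr_i(\Zz(n-g_i))$, whose union is $\Zz(n)$ by Lemma \ref{l:nondisjointrecurrence}. For $\subseteq$, I would take $\vec b \in \Zz(n)$, set $i := i(\vec b)$ and $\vec a := \vec b - \ee_i$; then $\sum_k a_k g_k = n - g_i$ shows $\vec a \in \Zz(n-g_i)$, the minimality of $i$ gives $a_j = b_j = 0$ for $j < i$, and $\vec b = \incr_i(\vec a)$ places $\vec b$ in the $i$-th piece.

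The disjointness step is what makes this an actual refinement. Suppose $\vec b$ lay in both the $i$-th and $j$-th summand with $i < j$. From the $j$-th summand, $\vec b = \incr_j(\vec a')$ with $a'_\ell = 0$ for all $\ell < j$, so in particular $b_i = a'_i = 0$ (since $\incr_j$ does not touch coordinate $i$). From the $i$-th summand, $\vec b = \incr_i(\vec a)$ forces $b_i = a_i + 1 \geq 1$, a contradiction.

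I do not expect a substantial obstacle; the argument is essentially bookkeeping about the smallest positive coordinate. The one subtlety to flag is the edge case $n = 0$, where $\Zz(0) = \{\vec 0\}$ has no positive coordinate and the right-hand union is empty; I expect the statement to inherit the implicit $n > 0$ hypothesis of Lemma \ref{l:nondisjointrecurrence}.
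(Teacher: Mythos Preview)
Your argument is correct: the assignment $\vec b \mapsto i(\vec b) = \min\{k : b_k \ge 1\}$ gives both the inclusion $\subseteq$ and the disjointness, and your handling of the $n=0$ edge case is appropriate. The paper does not supply its own proof of this lemma---it simply imports the statement from \cite[Lemma~3.1]{dynamic}---so there is no in-paper argument to compare against; your least-positive-coordinate partition is exactly the standard proof one would expect.
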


We can use a shorthand for the inner expression of Lemma \ref{disjdyn}.

\begin{defn}[Factorizations \emph{not} supported left of index $i$]
$$\Zz_{\ge i}(n) := \left\{\vec a \in \Zz(n) \mid {a_j = 0\ \forall j < i}\right\}$$
\end{defn}

Using this notation Lemma \ref{disjdyn} becomes, more concisely,
$$\Zz(n)=\bigsqcup_i\incr_i\left(\Zz_{\ge i}(n-g_i)\right)$$

For reference, we reproduce in full the algorithm \textsf{FactorizationsUpToElement} as it originally appeared in \cite{dynamic}, before making some small modifications. $\ee_i$ is the unit vector in coordinate $i$.

\begin{alg}[\cite{dynamic}, Algorithm 3.3]\label{a:dynamicfactor}
Given $n \in S = \<n_1, \ldots, n_k\>$, computes $\mathsf Z(m)$  for all $m \in [0, n] \cap S$.  
\begin{algorithmic}
\Function{FactorizationsUpToElement}{$S$, $n$}
\State $F[0] \gets \{\mathbf 0\}$
\ForAll{$m \in [0, n] \cap S$}
	\State $Z \gets \{\}$
	\ForAll{$i = 1, 2, \ldots, k$ with $m - n_i \in S$}
		\State $Z \gets Z \cup \{\aa + \ee_i  : \aa \in F[m - n_i]\}$
	\EndFor
	\State $F[m] \gets Z$
\EndFor
\State \Return $F$
\EndFunction
\end{algorithmic}
\end{alg}

The evaluation of the expression \quotes{$x \in S$} relies on some (pre-calculated) notion of membership, often implemented as storing, for each $0 \le i < g_{smallest}$, the lowest number $n_i$ which has $n_i \equiv i \bmod g_{smallest}$. If one wishes to avoid this setup step, the conditions \quotes{$m \in [0,n] \cap S$} and \quotes{$m-n_i \in S$} can be relaxed to \quotes{$m \in [0,n]$} and \quotes{$m-n_i \ge 0$}: for if $m \not\in S$, then $m-n_i \not\in S$ for all $i$,
so the algorithm will correctly produce $\Zz(m) = \varnothing$; and if $m-n_i \not\in S$, then $\Zz(m-n_i) = \varnothing$ and thus $\incr_i(\Zz(m-n_i)) = \varnothing$. This would allow the algorithm to return not just $\mathsf Z(m)$  for all $m \in [0, n] \cap S$ but instead for all $[0,n]$.

\subsection{Lexicographic order}
Algorithm \ref{a:dynamicfactor} is written using \emph{sets} as the storage data structure and \emph{set union} as the relevant combination operation. Sets are desirable to directly utilize in mathematical programming as most mathematical definitions begin in the realm of set theory (ZFC, constructive, or otherwise). In popular programming languages such as Python \cite{python}, sets are implemented as \emph{hash sets}, where uniqueness of members is maintained via a \emph{hash table}.

An alternative to using sets with set union, is to use \emph{lists} with \emph{concatenation} (for which we will use the same notation as disjoint union on sets, $\sqcup$). Lists are inherently ordered, which is desirable if one wishes to maintain a certain order on the factorizations, perhaps lexicographic.

\begin{lemma}\label{disjlex}
The indexing in Lemma \ref{disjdyn} respects lexicographic order:
$$\Zz(n)=
\incr_1\left(\Zz_{\ge 1}(n-g_1)\right)\ \bigsqcup^>\ ...\ \bigsqcup^>\ 
\incr_i\left(\Zz_{\ge i}(n-g_i)\right)\ \bigsqcup^>\ ...\ \bigsqcup^>\ 
\incr_d\left(\Zz_{\ge d}(n-g_d) \right)$$

\end{lemma}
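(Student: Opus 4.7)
The plan is to build on Lemma \ref{disjdyn}, which already gives both the disjointness and the equality as sets; what remains is to justify the ordering decoration $\bigsqcup^>$. It suffices to show that for all $i < j$, every $\vec a \in \incr_i(\Zz_{\ge i}(n-g_i))$ is lexicographically greater than every $\vec a' \in \incr_j(\Zz_{\ge j}(n-g_j))$.

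I would unpack the blocks directly from their definitions. An element $\vec a$ of block $i$ has the form $\vec b + \ee_i$ with $\vec b \in \Zz_{\ge i}(n-g_i)$, so $b_k = 0$ for all $k < i$; hence $a_k = 0$ for $k < i$, while $a_i = b_i + 1 \ge 1$. Analogously, an $\vec a'$ in block $j > i$ has $a'_k = 0$ for all $k < j$, so in particular $a'_k = 0$ for all $k \le i$. Comparing coordinate by coordinate: the first $i-1$ entries of $\vec a$ and $\vec a'$ agree (all zero), and at coordinate $i$ we have $a_i \ge 1 > 0 = a'_i$, so $\vec a >_{\mathrm{lex}} \vec a'$.

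I do not anticipate any real obstacle; once Lemma \ref{disjdyn} is in hand, the claim reduces to a one-line coordinatewise comparison that falls immediately out of the definitions of $\incr_i$ and $\Zz_{\ge i}$. The only point requiring care is matching the sign convention: listing the blocks in the order $i = 1, \ldots, d$ yields lex-decreasing blocks, consistent with the usual convention that leading coordinates carry the most weight. Note also that the lemma is purely a between-block ordering statement; no claim is made (or needed) about the internal order of each block, so no recursion on $n$ is required.
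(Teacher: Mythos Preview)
Your proposal is correct and matches the paper's own proof essentially line for line: both argue that an element of the $i$-th block has $a_i > 0$ (from $\incr_i$) while an element of any later block has $i$-th coordinate $0$ (from the $\Zz_{\ge j}$ condition with $j>i$), forcing the lexicographic inequality. The only cosmetic difference is that the paper compares consecutive blocks $i$ and $i+1$ whereas you compare arbitrary $i<j$, but these are equivalent.
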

\begin{proof}
    Each $\incr_i(\Zz_{\ge i}(n-g_i))$ is setwise greater than the next, as $\vec a \in \incr_i(\Zz_{\ge i}(n-g_i))$ has $a_i>0$ by virtue of being incremented by $\incr_i$, and $\vec b \in \incr_{i+1}(\Zz_{\ge {i+1}}(n-g_{i+1}))$ has $b_i=0$ by virtue of $\Zz_{\ge i+1}(n-g_{i+1})$ having zeroes left of coordinate $i+1$.
\end{proof}

\begin{cor}\label{resultislex}
    Interpreting $\bigsqcup$ in Lemma \ref{disjlex} as concatenation of lists, if each $\Zz(n-g_i)$ is already stored in lexicographic order, the resulting $\Zz(n)$ will also be in lexicographic order.
\end{cor}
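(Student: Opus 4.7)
The plan is to verify that each of the three operations appearing on the right-hand side of Lemma \ref{disjlex} preserves lex order, so that under the stated hypothesis the concatenation produces a sorted list. Combined with the trivial base case $\Zz(0) = \{\mathbf 0\}$, this immediately gives an induction on $n$ showing that every $\Zz(m)$ generated by the algorithm ends up sorted.

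First I would handle the restriction step: $\Zz_{\ge i}(n-g_i)$ is obtained from $\Zz(n-g_i)$ by the predicate filter ``$a_j = 0$ for all $j < i$'', and filtering preserves the relative order of surviving elements, so a lex-sorted input yields a lex-sorted sublist.

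Second, I would verify that applying $\incr_i$ elementwise preserves lex order. If $\vec a <_{\mathrm{lex}} \vec b$ first differ at some coordinate $j$, then adding $1$ at coordinate $i$ on both sides either does not touch position $j$ (when $i \ne j$) or adds $1$ to both sides at position $j$ (when $i = j$); in either case the entries at coordinates $< j$ still agree and the strict inequality at $j$ persists, so $\incr_i(\vec a) <_{\mathrm{lex}} \incr_i(\vec b)$, and $\incr_i$ is lex-monotone on lists.

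Finally, the concatenation step is already handled by Lemma \ref{disjlex}: the lemma states that each block $\incr_i(\Zz_{\ge i}(n-g_i))$ lies setwise lex-above the next block, so writing the $d$ already-sorted blocks end-to-end in the order given yields a single globally lex-sorted list. I do not foresee a substantive obstacle, since Lemma \ref{disjlex} has done the real combinatorial work of separating the blocks; the corollary amounts to the observation that list concatenation, applied in the order specified by the lemma, is the order-respecting realisation of the disjoint union in Lemma \ref{disjdyn}.
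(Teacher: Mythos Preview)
Your proposal is correct and is precisely the natural unpacking of why the corollary follows from Lemma~\ref{disjlex}; the paper itself gives no explicit proof, treating the statement as immediate. Your three ingredients (filtering preserves relative order, $\incr_i$ is strictly lex-monotone, and the blockwise inequality of Lemma~\ref{disjlex} makes the concatenation globally sorted) are exactly what is implicitly being invoked.
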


We now present the natural variant of Algorithm \ref{a:dynamicfactor} which utilizes Lemma \ref{disjdyn} instead of Lemma \ref{l:nondisjointrecurrence}, using (ordered) lists instead of sets, and which according to Corollary \ref{resultislex} computes all of its result lists in lexicographic order.

\begin{alg}\label{a:dynlist}\
\begin{algorithmic}[1]
\Function{LexicographicFactorizationListsUpToElement}{$n, (g_1,...,g_d)$}
\State $F[0] \gets [\mathbf 0]$
\ForAll{$m \in [0, n]$}
	\State $Z \gets [\ ]$
	\ForAll{$i = 1, 2, \ldots, d$ with $m - g_i \ge 0$}
		\State $Z \gets Z \sqcup \left[\incr_i(\aa)  : \aa \in F[m - g_i]\ \mathsf{if}\ \mathsf{isAllZeroesLeftOfIndex}(\aa, i)\right]$
	\EndFor
	\State $F[m] \gets Z$
\EndFor
\State \Return $F$
\EndFunction
\end{algorithmic}
\end{alg}

\subsection{Parallelism}
The above Algorithm \ref{a:dynlist} can be parallelized either elementwise (in line 3), or factorizationwise (in line 6), or \emph{both}.\footnote{Indeed, each element could be assigned to a CPU thread, and each thread could launch its own factorizationwise GPU kernels.}

In elementwise (threadwise) parallelism, the computation of $\Zz(0), \Zz(1),...,\Zz(n)$ is split up into batches of size $b$, which is at most $g_{smallest}$. If all factorization lists up to $\Zz(m)$ are in the memo, then each of $\Zz(m + 1), ..., \Zz(m + b)$ can be computed in parallel as they do not affect each other's outputs.

Alternatively, in factorizationwise (instructionwise) parallelism, for a fixed $m$ and $i$, the copy-and-increment step of computing $\{\incr_i(\aa)  : \aa \in F[m - g_i]\}$ can be parallelized among as many workers as there are factorizations. We choose to implement this variant presently.

\subsection{Saving subset cardinalities}
Suppose we are to implement the above factorizationwise parallelism as a GPU kernel. As multiple threads cannot write to the same block of memory in a kernel (undefined behavior, as one would overwrite the other), kernels typically operate by assigning each worker a distinct, and notably \textit{pre-assigned}, output location upon which to write. If we want the result of the copies to be contiguous in the target memory, rather than having gaps anywhere that \textsf{isAllZeroesLeftOfIndex} was false, then it may be useful to save the cardinality of each $\Zz_{=i}(n)$ as they are computed, where 
$$\Zz_{=i}(n) = \left\{\vec a \in \Zz(n) \mid {a_j = 0\ \forall j < i \mathsf{\ and\ } a_i>0}\right\}$$
We note that $\Zz_{\ge i}(n) = \bigsqcup_{j \ge i} \Zz_{=i}(n)$,\footnote{Except $n=0$, which has $\abs{\Zz_{\ge i}(0)}=1$ but $\abs{\Zz_{=i}(0)}=0$ for all $i$. Thus some may prefer to write $\Zz_{\not<i}(n)$.} and that the beginning index of $\Zz_{\ge i}(n)$ inside $\Zz(n)$ is precisely $\sum_{j < i} \abs{\Zz_{=i}(n)}$. We further note that each of the $i$-indexed expressions on the right of Lemma \ref{disjlex} coincides exactly with $\Zz_{=i}(n)$. That is, $ \Zz_{=i}(n) = \incr_i\left(\Zz_{\ge i}(n-g_i)\right) $. And thus, the cardinality of each $\Zz_{=i}(n)$ can be saved by simply recording how many results are copy-and-incremented during the $i$'th iteration of the loop of calculating $\Zz(n)$, which is itself $\sum_{j \ge i} \abs{\Zz_{=j}(n-g_i)}$.

\subsection{Factorizationwise parallel algorithm}
We now present the natural variant of Algorithm \ref{a:dynlist} which uses factorizationwise parallelism and saves the cardinality of each $\Zz_{=i}(n)$ in order to precalculate the index from which to begin the copy. For simplicity we allow the kernel call to only use 1 thread per block, while our actual implementation uses more. The code references a kernel \textsf{copyAndIncrementIndex(Factorization* destination, Factorization* source, int index)}.

\begin{alg}\label{a:dynlistparallel}\
\begin{algorithmic}[1]
\Function{LexFacListsUpToElement\_FactorizationwiseParallel}{$n, (g_1,...,g_d)$}
\State $Z[0] \gets [\mathbf 0]$
\State \textsf{int[d][n] cardinalities};
\State \textsf{cardinalities[0] = [1,...,1]};
\ForAll{$m \in [0, n]$}
    \State $\mathsf{cardinalityThisElement=0}$;
	\ForAll{$i = 1, 2, \ldots, d$ with $m - g_i \ge 0$}
        \State $\mathsf{startIndex} = \sum_{j < i} \mathsf{cardinalities}[m-g_i][j]$;
        \State $\mathsf{cardinalityThisIndex} = \sum_{j \ge i} \mathsf{cardinalities}[m-g_i][j]$;
        \State $\mathsf{copyAndIncrementIndex\kernelcall{cardinalityThisIndex, 1}(}$
        \State \hspace{\algorithmicindent} $Z[m][\mathsf{cardinalityThisElement}],\ Z[m-g_i\mathsf{][startIndex],\ i)}$;
        \State $\mathsf{cardinalities}[m][i]=\mathsf{cardinalityThisIndex}$;
        \State $\mathsf{cardinalityThisElement}\ +=\ \mathsf{cardinalityThisIndex}$;
	\EndFor
\EndFor
\State \Return $Z$
\EndFunction
\end{algorithmic}
\end{alg}

Our reference implementation runs both the parallel and single-threaded variants of memo population and records the times of both for comparison.

\section{Parallel bounded lexicographic enumeration}
Parallel bounded lexicographic enumeration as described in \cite[Algorithm 3.1]{parallelboundedlexicographic} splits work among many threads on a GPU and calls \textsf{nextCandidate} as a kernel which runs on each work slice.
We encourage the reader to review \cite{parallelboundedlexicographic} for a complete description of the behavior of the \textsf{nextCandidate} function and for an introduction to GPU programming generally.

\pagebreak
For reference, we reproduce here in full the algorithm \textsf{nextCandidate} as it originally appeared before making further modifications. $\phi(\vec a) = \sum_i a_ig_i$.

\begin{alg}[The lexicographic greatest next-candidate algorithm]\ \\
$\mathsf{nextCandidate}:\ ParallelLexicographicState \to ParallelLexicographicState$\\
Input: $(n,\ (g_1,...,g_d),\ (a_1,...,a_d),\ (b_1,...,b_d),\ wasValid,\ endOfStream)$\\
Output: The input variables, modified (the function operates in-place)
\begin{enumerate}
    \item $\mathsf{if}\ (endOfStream)\ return;$
   \item $\mathsf{let}\ i=0;\ \mathsf{for}\ (\mathsf{let}\ j = 1;\ j\le dim;\ j++) \{\ \mathsf{if}\ (a_j > 0)\ i = j\ \}$. \\(Finding the rightmost nonzero index excluding the final.)
   \item $\mathsf{if}\ (i=0) \{\ endOfStream=true;\ return;\ \}$, detecting if the last candidate was the final
    \item $a_d = 0$, resetting the final index to zero in case it was nonzero from having been solved for previously;
   \item $a_i --$ , which is the first step of $\mathsf{decrementAndSolve}(\vec a, i)$, consisting of steps 5-11
   \item $\mathsf{let}\ p = n - \phi(\vec a)$, the element to attempt to factor by $g_{i+1}$
   \item $\mathsf{let}\ m = p / g_{i+1}$, where division denotes integer division with remainder discarded.
   \item $\mathsf{let}\ r = p - m*g_{i+1}$, the remainder;
   \item $wasValid = true$, temporarily;
   \item $\mathsf{if}\ (r \neq 0) \{\ m ++;\ \mathsf{wasValid}=false; \} $
   \item $a_{i+1} = m$
   \item $\mathsf{if}\ ((a_1,...,a_d) \leq_{lex} (b_1,...,b_d))\ \{\ endOfStream = true\ \}$
   \item Return $(n,\ (g_1,...,g_d),\ (a_1,...,a_d),\ (b_1,...,b_d),\ wasValid,\ endOfStream)$
\end{enumerate}
\end{alg}

\subsection{Adding dynamic behavior}
As mentioned in the original work, this method admits a dynamic optimization. 
\begin{quote}{\cite{parallelboundedlexicographic}, \quotes{Future work}}
In $\mathsf{nextCandidate}$, if e.g. $i=d-3$, after decrementing $a_i$ and zeroing $a_d$, all of the remaining factorizations $\vec c$ of the form $(a_1,...,a_i,c_{d-2},c_{d-1},c_d)$ are in bijection with $\Zz(n-\phi(\vec a), (g_{d-1},g_{d-1},g_d))$. If $\Zz(x, (g_{d-2},g_{d-1},g_{d}))$ is stored for all $0 \le x \le m$ for some small value of $m$, and these sets are made accessible to each worker during their $\mathsf{nextCandidate}$ evaluation, then when index $i=d-3$ and $\phi(\vec a) \le m$, the remaining solutions $\vec c$ with $c_j = a_j$ for all $1 \le j \le d-3$ could be returned all at once by copying $\Zz(n-\phi(\vec a), (g_{d-2},g_{d-1},g_{d}))$ and prepending $(a_1,...,a_i)$ to each. (The signature and behavior of the $\mathsf{nextCandidate}$ function and its calling code would need to be modified appropriately to allow an output of more than one result.)

We give the above example of dynamic behavior for $i=d-3$, but it is valid for any number of the trailing indices.
\end{quote}

Happily, we just discussed in Section 2 a nice way to compute all factorization sets \linebreak$\Zz(0, (g_{d-\mathsf{memoDim}+1},...,g_{d})), ..., \Zz(n, (g_{d-\mathsf{memoDim}+1},...,g_{d}))$.

We add the observation that, after copying $\Zz(n-\phi(\vec a), (g_{d-\mathsf{memoDim}+1},...,g_{d}))$ and prepending $(a_1,...,a_{d-\mathsf{memoDim}})$ to each, it is not necessary to set the state's \textsf{lastCandidate} to the actually lexicographically final factorization returned. It is sufficient to leave the value of $\vec a$ at \linebreak$(a_1,...,a_{d-\mathsf{memoDim}}, 0,...,0)$: for this is lexicographically less than every valid candidate that was just copied; and there are no factorizations skipped in between the final copied factorization and $\vec a$ (because such a factorization $\vec f$ would have $f_i = a_i$ for all $i \le d-\mathsf{memoDim}$, which would imply it was just copied).

\pagebreak
\begin{alg}[Lexicographic next-candidate with low-dimension tabulation]\ \\
$\mathsf{nextCandidateDynamic}:\ ParallelDynamicLexState \to ParallelDynamicLexState$\\
Input: $(n,\ (g_1,...,g_d),\ (a_1,...,a_d),\ (b_1,...,b_d),\ \mathsf{wasValid},\ \mathsf{endOfStream},\ $
\\\hspace*{\parindent * 3}$\mathsf{Memo},\ \mathsf{memoDim},\ \mathsf{topOfMemo},\ \mathsf{Outputs})$\\
Output: The input variables, modified (the function operates in-place)
\begin{enumerate}
    \item $\mathsf{if}\ (\mathsf{endOfStream})\ return;$
   \item $\mathsf{let}\ i=0;\ \mathsf{for}\ (\mathsf{let}\ j = 1;\ j\le d;\ j++) \{\ \mathsf{if}\ (a_j > 0)\ i = j\ \}$. \\(Finding the rightmost nonzero index excluding the final.)
   \item $\mathsf{if}\ (i=0) \{\ \mathsf{endOfStream}=true;\ return;\ \}$
    \item $a_d = 0$
   \item $a_i --$
   \item If $i = d-\mathsf{memoDim}$ and $p < \mathsf{topOfMemo}$:
   \begin{enumerate}
       \item for each index $j$ of $\mathsf{Memo}[p]$, copy $\mathsf{Memo}[p][j]$ onto the final $\mathsf{memoDim}$ coordinates of $\mathsf{Outputs}[j]$, and then copy $(a_1,...,a_{d-\mathsf{memoDim}})$ onto the leading coordinates. Also, set $\mathsf{wasValid}=false$.
   \end{enumerate}
   \item Else:
   \begin{enumerate}
       \item $\mathsf{let}\ p = n - \phi(\vec a)$
       \item $\mathsf{let}\ m = p / g_{i+1}$
       \item $\mathsf{let}\ r = p - m*g_{i+1}$
       \item $\mathsf{wasValid} = true$
       \item $\mathsf{if}\ (r \neq 0) \{\ m ++;\ \mathsf{wasValid}=false; \} $
   \item $a_{i+1} = m$
   \end{enumerate}
   \item $\mathsf{if}\ ((a_1,...,a_d) \leq_{lex} (b_1,...,b_d))\ \{\ \mathsf{endOfStream} = true\ \}$
   \item Return.
\end{enumerate}
\end{alg}

In the same way that the original \textsf{nextCandidate} needs to be followed by copying each state's \textsf{lastCandidate} (i.e. $\vec a$) to the state's \textsf{Buffer} after each iteration, similarly \textsf{nextCandidateDynamic} needs to also have its \textsf{Outputs} dumped to the \textsf{Buffer} and zeroed after each kernel call. As the function has no return value, the final saving of factorizations occurs inside \textsf{copyDeviceBufferToHostAndClear}.

Analogous to \cite[Algorithm 5.1]{parallelboundedlexicographic}, we give an abbreviated description of the overall algorithm to call \textsf{nextCandidateDynamic} in a loop until completion, and we encourage the reader to review the full implementation in the supplementary code files.

\begin{alg}\label{a:dynlistparallel}\
\begin{algorithmic}[1]
\State \textsf{function FactorizationsParallelLexicographicWithMemoization(}$n, (g_1,...,g_d)$\textsf{) \{}
\State  \hspace{\algorithmicindent} \textsf{memo, states, buffer, outputs = allocateEverything();}
\State  \hspace{\algorithmicindent} \textsf{populateMemo(memo)};
\State  \hspace{\algorithmicindent} \textsf{setInitialCandidate(states)};
\State  \hspace{\algorithmicindent} \textsf{iteration = 0};
\State  \hspace{\algorithmicindent} \textsf{while (!allStatesAreEndOfStream(states)) \{}
\State  \hspace{\algorithmicindent} \hspace{\algorithmicindent} \textsf{if (iteration \% kernelsBetweenFreshBounds = 0) \{ splitWork(states); \}}
\State  \hspace{\algorithmicindent} \hspace{\algorithmicindent} \textsf{nextCandidateDynamic}$\kernelcall{\mathsf{BLOCKS, THREADS}}$\textsf{(states, memo, outputs);}
\State  \hspace{\algorithmicindent} \hspace{\algorithmicindent} \textsf{copyOutputsToBufferAndClear}\textsf{(states, outputs, buffer);}
\State  \hspace{\algorithmicindent} \hspace{\algorithmicindent} \textsf{if (anyBufferIsFull(states)) \{ copyDeviceBufferToHostAndClear(states); \}}
\State  \hspace{\algorithmicindent} \hspace{\algorithmicindent} \textsf{iteration++};
\State  \hspace{\algorithmicindent} \textsf{\}}
\State  \hspace{\algorithmicindent}  \textsf{copyDeviceBufferToHostAndClear(buffer);}
\State \textsf{\}}
\end{algorithmic}
\end{alg}

\section{CUDA implementations}
In the supplementary code files, we update the CUDA implementation of parallel bounded lexicographic streams from \cite{parallelboundedlexicographic} to include the above described algorithms. This entails the lexicographic variant of the dynamic factorizations algorithm, in both its single-threaded CPU form \textsf{LexicographicFactorizationListsUpToElement} and its parallel form 
\textsf{LexFacListsUpToElement\_FactorizationwiseParallel}; the lexicographic successor algorithm in its dynamic variant \textsf{nextCandidateDynamic}; and the complete parallel dynamic lexicographic factorizations algorithm \textsf{FactorizationsParallelLexicographicWithMemoization}.

\subsection{Performance}

Table 1 shows the runtimes of the parallel lexicographic algorithm with low-dimension tabulation, including both the times to populate the memo on the CPU and the GPU. These runtimes compare favorably to those recorded in \cite{parallelboundedlexicographic} for the same generators.

CPU memo population time is consistently lower than GPU for small memo sizes, but GPU memo population is faster after some threshold. We note that some cases have lower memo dimensions resulting in faster overall runtimes than higher memo dimensions for the same element, e.g. dimension 6 with memo dimension 3 vs 4, while others are faster with higher memo dimension, e.g. dimension 9 with memo dimension 4 vs 5.

\begin{table}
{\footnotesize

\begin{tabular}{lll|llll}
\hline
dim & memo\_dim & element & num\_results & cpu\_memo\_us & gpu\_memo\_us & runtime\_ms \\
\hline
3   & 1         & 100000  & 273793       & 313           & 9764          & 2733        \\
3   & 1         & 200000  & 1094693      & 452           & 9205          & 7904        \\
3   & 1         & 300000  & 2462699      & 322           & 9087          & 16241       \\
\hline
4   & 2         & 5000    & 29601        & 2920          & 27188         & 400         \\
4   & 2         & 10000   & 232364       & 2888          & 27229         & 420         \\
4   & 2         & 15000   & 779252       & 3073          & 33543         & 679         \\
\hline
5   & 2         & 1000    & 1920         & 1674          & 30028         & 485         \\
5   & 2         & 3000    & 125780       & 1713          & 29153         & 1047        \\
5   & 2         & 5000    & 928872       & 1943          & 30699         & 2101        \\
5   & 2         & 10000   & 14375415     & 1624          & 33645         & 7600        \\
\hline
5   & 3         & 1000    & 1920         & 7952          & 26683         & 502         \\
5   & 3         & 3000    & 125780       & 8295          & 28300         & 702         \\
5   & 3         & 5000    & 928872       & 7951          & 28841         & 953         \\
\hline
6   & 3         & 1000    & 10873        & 7586          & 27999         & 601         \\
6   & 3         & 3000    & 1910535      & 8255          & 27061         & 1751        \\
6   & 3         & 5000    & 22955008     & 7341          & 26576         & 6298        \\
\hline
6   & 4         & 1000    & 10873        & 43686         & 24625         & 1155        \\
6   & 4         & 2000    & 273487       & 42812         & 25625         & 4079        \\
6   & 4         & 3000    & 1910535      & 45837         & 26152         & 10640       \\
\hline
7   & 4         & 1000    & 52036        & 44906         & 24383         & 1153        \\
7   & 4         & 1500    & 473670       & 45228         & 28812         & 2299        \\
7   & 4         & 2000    & 2369185      & 55714         & 26664         & 5416        \\
\hline
8   & 4         & 1000    & 216822       & 47843         & 25655         & 1659        \\
8   & 4         & 1500    & 2740729      & 48720         & 26556         & 5736        \\
8   & 4         & 2000    & 17552389     & 46092         & 28807         & 25165       \\
\hline
9   & 4         & 500     & 9054         & 49239         & 29815         & 1326        \\
9   & 4         & 1000    & 801745       & 47665         & 26707         & 8202        \\
9   & 4         & 1500    & 13936185     & 49328         & 26591         & 52694       \\
\hline
9   & 5         & 500     & 9054         & 21401         & 18208         & 1129        \\
9   & 5         & 1000    & 801745       & 22461         & 16717         & 6018        \\
9   & 5         & 1500    & 13936185     & 22711         & 18628         & 47513       
\end{tabular}
}
\medskip
\caption{\footnotesize Runtimes for computing $\mathsf Z(n, (13, 37, 38 [, 40,41,..., 45]))$ using parallel bounded lexicographic enumeration with low-dimension tabulation. Overall runtimes are in milliseconds while memo population times are in microseconds. Computations were performed on a machine with an AMD Ryzen 3900X CPU and an NVIDIA RTX 3080 GPU. CUDA implementation parameters were $68*2$ grid size (the 3080 has 68 SMs), 8 threads per block, 40000 buffer per thread (note that this is a significant increase from the previous implementation's 1000), and 1024 \textsf{kernelsBetweenNewBounds}. All runs were performed with a fully populated memo, i.e., populated up to at least the highest element tested for that group (excepting dimension 3).
}
\end{table}

\section{Future work}
    Describe the asymptotic runtime behavior of \textsf{FactorizationsParallelLexicographicWithMemoization} and \textsf{LexFacListsUpToElement\_FactorizationwiseParallel}.


%



\begin{thebibliography}{99}
\raggedbottom

\bibitem{parallelboundedlexicographic}
Thomas Barron.
\emph{GPU-accelerated factorization sets in numerical semigroups via parallel bounded lexicographic streams}. (2024) Preprint, \href{https://arxiv.org/abs/2405.07989}{\textsf{arxiv:2405.07989}}

\bibitem{dynamic}
Thomas Barron, Christopher O'Neill, Roberto Pelayo.
\emph{On dynamic algorithms for factorization invariants in numerical monoids}. Mathematics of Computation 86 (2017), 2429-2447. \href{https://arxiv.org/abs/1507.07435}{\textsf{arxiv:1507.07435}}

\bibitem{cuda}
NVIDIA Corporation. \emph{CUDA C++ Programming Guide}. \\\url{https://docs.nvidia.com/cuda/cuda-c-programming-guide/index.html}

\bibitem{python}
Python Software Foundation. \emph{The Python Standard Library. \quotes{Set Types.}}\\\url{https://docs.python.org/3/library/stdtypes.html#set-types-set-frozenset}

\end{thebibliography}
\end{document}